\documentclass[11 pt]{amsart}

\parskip .1ex
\usepackage[a4 paper, margin=3.3cm]{geometry}
\usepackage{amscd}
\usepackage{amsmath}
\usepackage{amsthm}
\usepackage{amsfonts}
\usepackage{amssymb}
\usepackage{amsxtra}


\def\C{\mathbb{C}}

\def\N{\mathbb{N}}

\def\R{\mathbb{R}}

 \newtheorem{thm}{Theorem}[section]
 
 \newtheorem{lem}[thm]{Lemma}

 \newtheorem{rem}[thm]{Remark}

\newcommand{\be}{\begin{equation}}
\newcommand{\ee}{\end{equation}}
\newcommand{\bea}{\begin{eqnarray}}

\newcommand{\eea}{\end{eqnarray}}
\newcommand{\Bea}{\begin{eqnarray*}}
\newcommand{\Eea}{\end{eqnarray*}}

\newcounter{cnt1}
\newcounter{cnt2}
\newcounter{cnt3}
\newcommand{\blr}{\begin{list}{$($\roman{cnt1}$)$}
 {\usecounter{cnt1} \setlength{\topsep}{0pt}
 \setlength{\itemsep}{0pt}}}
\newcommand{\bla}{\begin{list}{$($\alph{cnt2}$)$}
 {\usecounter{cnt2} \setlength{\topsep}{0pt}
 \setlength{\itemsep}{0pt}}}
\newcommand{\bln}{\begin{list}{$($\arabic{cnt3}$)$}
 {\usecounter{cnt3} \setlength{\topsep}{0pt}
 \setlength{\itemsep}{0pt}}}
\newcommand{\el}{\end{list}}
\title[HUP for n-parallel lines]{Heisenberg uniqueness pairs corresponding to a finite number of parallel lines}

\author{Sayan Bagchi}
\sloppy
\date{}
\begin{document}


\address{Stat-Math Unit,
Indian Statistical Institute, Kolkata - 700102, India}
\email{sayansamrat@gmail.com}


\keywords{ Fourier transform,Heisenberg uniqueness pairs} \subjclass[2010] {Primary:  42A38, 42B10.
Secondary: 44A35.}


\begin{abstract}
In this paper, we study the Heisenberg uniqueness pairs corresponding to a finite number of parallel lines $\Gamma$. We give a necessary condition and a sufficient condition for a subset $\Lambda$ of $\R^2$ so that $(\Gamma,\Lambda)$ becomes a HUP.

\end{abstract}


\maketitle

\section[Introduction]{Introduction} Let $\Gamma$ be a curve on $\R^2$ and $\Lambda$ be a subset of $\R^2$. We call $(\Gamma, \Lambda)$ to be a Heisenberg uniqueness pair if
$$\hat{\mu}|_{\Lambda}=0 \mbox{ implies } \mu=0$$
for every measure $ \mu$ on $\R^2$ which is supported on $\Gamma$ and also absolutely continuous with respect to the arc length of the curve. Here $\hat{\mu}$ stands for the Fourier transform of $\mu$ defined by
$$\hat{\mu}(\xi,\eta)=\int_{\R^2}e^{\pi i (x\xi+y\eta)}d\mu(x,y)$$
where $\xi, \eta\in \R^2$.

The terminology "Heisenberg uniqueness pair" was first introduced by Hedenmalm and Montes-Rodr\'{\i}guez in \cite{HM}. In that paper they also studied the cases where $\Gamma$ are a hyperbola and two parallel straight lines. After that, Heisenberg uniqueness pairs were studied for some more well-known curves on $\R^2$ by various people. For example, the case of circle and parabola were studied by Nir Lev \cite{L} and Per Sj\"{o}lin \cite{S2} respectively. Please see \cite{GS}, \cite{JK}, \cite{S1}, \cite{V1} and \cite{V2} for more results in this direction.

 In this paper we extend the results of \cite{B} to finite number of parallel lines. However, unlike \cite{B}, we are not able to characterize Hiesenberg pairs in the case $\Gamma$ is a set of finitely many parallel lines. 
\section{Main Result}
In order to state our result we first set up some notations. Consider a subset $E$ of $\R$ and a point $\xi\in E$. Then we can define the following sets:\\
For $2\leq l\leq n$, $P^{E, \xi}_l=\{ \psi: E\rightarrow \C$ : there is an interval $I_\xi$ around $\xi$ and functions $\varphi_j\in L^1(\R)$, $j=1,2, ...,l-1$ such that
$$\hat{\varphi_1}+ \hat{\varphi_2}\psi+\hat{\varphi_3}\psi^2+\cdots + \hat{\varphi_{l-1}}\psi^{l-2}+\psi^{l-1}=0$$

on $I_\xi\cap E\}$. It is worthy to note the particular case $l=2$, where $P^{E, \xi}_2=\{ \psi: E\rightarrow \C$ : there is an interval $I_\xi$ and functions $\varphi\in L^1(\R)$, such that
$\hat{\varphi}= \psi$

on $I_\xi\cap E\}$. We have to work with this particular case many times throughout the paper. Note that by Wiener's lemma (\cite{K}, page 57), if $\psi\in P^{E, \xi}_2$, then $\frac{1}{\psi}\in P^{E, \xi}_2$.

Let us consider $n$ parallel lines $\Gamma=\R\times \{\alpha_1, \alpha_2, \cdots, \alpha_n\}$ where $\alpha_{i+1}-\alpha_i=constant$, $1\leq i \leq n-1$. By translation and rescaling discussed in \cite{HM}, it is enough to consider $\Gamma=\R\times \{0,1,2,\cdots, n-1\}$. If $\mu$ is a measure such that $\mu$ is supported on $\Gamma$ and absolutely continuous with respect to arc length of $\Gamma$, then there exists functions $f_0, f_1, \cdots f_{n-1}\in L^1(\R)$, such that
\be\label{eq:mu}\hat{\mu}(\xi, \eta)=\hat{f_0}(\xi)+e^{\pi i\eta} \hat{f_1}(\xi)+ e^{2\pi i\eta}\hat{f_2}(\xi)+\cdots + e^{(n-1)\pi i \eta}\hat{f}_{n-1}(\xi).\ee
Since $\hat{\mu}$ is 2-periodic with respect to the second variable, we may assume $\Lambda$ to be 2-periodic with respect to the second coordinate.

Let $\Pi(\Lambda)$ be the projection of $\Lambda$ on $x$-axis. That is,
$$\Pi(\Lambda)=\{\xi\in \R : (\xi, \eta)\in \Lambda \mbox{ for some } \eta\in \R\}.$$
Also consider the set,
$$Img(\xi)=\{\eta\in \R:(\xi, \eta)\in \Lambda, 0\leq \eta < 2\}. $$
We can write $\Pi(\Lambda)$ as a union of $n$ number of disjoint sets. The disjoint sets are described below.
\begin{enumerate}
\item For $1\leq k <n$, $\Pi^k(\Lambda)= \{\xi\in \Pi(\Lambda):$ there are exactly k number of distinct points $\eta_1, \eta_2,\dots, \eta_k\in Img(\xi)\}$. We can assume $\eta_1< \eta_2<\cdots <\eta_k$ for each $\xi\in\Pi^k(\Lambda)$.\\
\item $\Pi^n(\Lambda)= \{\xi\in \Pi(\Lambda): \mbox{ there are atleast n number of distinct points in } Img(\xi)\}$.
\end{enumerate}
Similar as \cite{B}, here also we have to find a subset for each $\Pi^k(\Lambda)$, $1\leq k\leq n-1$, which will be removed from $\Pi^k(\Lambda)$ in order to get our result. Before defining those sets let us set up some more notations.
For each $\xi \in \Pi^k(\Lambda)$, $1<k<n$ we will consider two matrices as follows
$$A_{\xi,n-1}^{\Pi^k(\Lambda)}=\begin{bmatrix} 1 & e^{\pi i \eta_1} & e^{2\pi i \eta_1} & \dots & e^{(n-2)\pi i \eta_1} \\ 1 & e^{\pi i \eta_2} & e^{2\pi i \eta_2} & \dots & e^{(n-2)\pi i \eta_2}\\ \hdotsfor{5} \\ 1 & e^{\pi i \eta_k} & e^{2\pi i \eta_k}  & \dots & e^{(n-2)\pi i \eta_k} \end{bmatrix} \mbox{ and }
A_{\xi,k}^{\Pi^k(\Lambda)}= \begin{bmatrix} 1 & e^{\pi i \eta_1} & e^{2\pi i \eta_1} & \dots & e^{(k-1)\pi i \eta_1} \\ 1 & e^{\pi i \eta_2} & e^{2\pi i \eta_2} & \dots & e^{(k-1)\pi i \eta_2}\\ \hdotsfor{5} \\ 1 & e^{\pi i \eta_k} & e^{2\pi i \eta_k}  & \dots & e^{(k-1)\pi i \eta_k} \end{bmatrix} $$
where $\eta_i\in Img(\xi)$, $i=1,...,k$. Notice that for $k=n-1$, both the matrices are same. Let us also consider two column vectors associated to each $\xi\in \Pi^k(\Lambda)$, $k=2,...,n-1$, defined as
$$B_{\xi, n-1}^{\Pi^k(\Lambda)}=\left[ \begin{array}{cc} -e^{(n-1)\pi i \eta_1} \\ -e^{(n-1)\pi i \eta_2}\\.\\.\\-e^{(n-1)\pi i \eta_k} \end{array} \right] \mbox{ and }
B_{\xi, k}^{\Pi^k(\Lambda)}=\left[ \begin{array}{cc} -e^{k\pi i \eta_1} \\ -e^{k\pi i \eta_2}\\.\\.\\-e^{k\pi i \eta_k} \end{array} \right]$$
Here also, two column vectors are same when $k=n-1$. Let us now define the following sets:
\begin{enumerate}
\item $\Pi^{1*}(\Lambda)=\{\xi\in \Pi^1(\Lambda):\chi\in P^{\Pi^1(\Lambda), \xi}_n,$ where $\chi(\xi)= e^{\pi i \eta}, \eta\in Img(\xi)$\}\\
\item For $1<k<n-1$, $\Pi^{k*}_{su}(\Lambda)=\{\xi\in \Pi^k(\Lambda):$ there exists an interval $I_\xi$ around $\xi$ and $\varphi_i\in L^1(\R), i=1,2,...,n-1$ such that $(\hat{\varphi_1},\hat{\varphi_2},...,\hat{\varphi_{n-1}})$ is a solution of the system of equation $A_{\xi,n-1}^{\Pi^k(\Lambda)}X_\xi= B_{\xi, n-1}^{\Pi^k(\Lambda)}$ on $I_\xi\cap \Pi^k(\Lambda)\}$,\\
\item For $1<k<n-1$,$\Pi^{k*}_{ne}(\Lambda)=\{\xi\in \Pi^k(\Lambda)$: there exists an interval $I_\xi$ around $\xi$ and $\varphi_i\in L^1(\R), i=1,2,...,k $ such that $(\hat{\varphi_1},\hat{\varphi_2},...,\hat{\varphi_k})$ is the solution of the system of equation $A_{\xi,k}^{\Pi^k(\Lambda)}X_\xi= B_{\xi, k}^{\Pi^k(\Lambda)}$, on $I_\xi\cap \Pi^k(\Lambda)\}, $\\
\item For $k=n-1$, $ \Pi^{n-1*}(\Lambda)= \{\xi\in \Pi^{n-1}(\Lambda):$ there exists an interval $I_\xi$ around $\xi$ and $\varphi_i\in L^1(\R), i=1,2,...,n-1$ such that $(\hat{\varphi_1},\hat{\varphi_2},...,\hat{\varphi_{n-1}})$ is the solution of the system of equation $A_{\xi,n-1}^{\Pi^{n-1}(\Lambda)}X_\xi= B_{\xi, n-1}^{\Pi^{n-1}(\Lambda)}$, on $I_\xi\cap \Pi^{n-1}(\Lambda)\}$
\end{enumerate}

\begin{rem} One can easily see that each of the sets defined above will remain same if we change the order of the points in $Img(\xi)$.
\end{rem}

Now we are in a position to state our main result.
\begin{thm}\label{thm:main}
Let $\Gamma=\R\times \{0, 1, 2,\dots, n-1\}$ where $n\in \N$ and $\Lambda\in\R^2$ be a closed set which is 2-periodic with respect to the second variable. If $(\Gamma, \Lambda)$ is a HUP , then
$$\widetilde{\Pi_{ne}(\Lambda)}= \Pi^n(\Lambda)\bigcup\left(\Pi^1(\Lambda)\setminus \Pi^{1*}(\Lambda)\right)\bigcup \left(\Pi^{n-1}(\Lambda)\setminus \Pi^{n-1*}(\Lambda)\right)\bigcup_{j=2}^{n-2}\left(\Pi^j(\Lambda)\setminus \Pi_{ne}^{j*}(\Lambda)\right)$$
is dense in $\R$. Conversely, if
$$\widetilde{\Pi_{su}(\Lambda)}= \Pi^n(\Lambda)\bigcup\left(\Pi^1(\Lambda)\setminus \Pi^{1*}(\Lambda)\right)\bigcup \left(\Pi^{n-1}(\Lambda)\setminus \Pi^{n-1*}(\Lambda)\right)\bigcup_{j=2}^{n-2}\left(\Pi^j(\Lambda)\setminus \Pi_{su}^{j*}(\Lambda)\right)$$
is dense in $\R$, then $(\Gamma, \Lambda)$ is a HUP.
\end{thm}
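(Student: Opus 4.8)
The plan is to reformulate the vanishing condition as a statement about polynomials. Fix $\xi\in\Pi(\Lambda)$ and set $z=e^{\pi i\eta}$; since $\hat\mu$ is $2$-periodic in $\eta$ and $\eta\mapsto e^{\pi i\eta}$ is a bijection of $[0,2)$ onto the unit circle, \eqref{eq:mu} shows that $\eta\mapsto\hat\mu(\xi,\eta)$ is the restriction to the circle of the polynomial
\[
P_\xi(z)=\hat{f_0}(\xi)+\hat{f_1}(\xi)z+\cdots+\hat{f}_{n-1}(\xi)z^{n-1}
\]
of degree at most $n-1$. Thus $\hat\mu|_\Lambda=0$ is equivalent to: for every $\xi\in\Pi^k(\Lambda)$ the polynomial $P_\xi$ vanishes at the $k$ distinct points $z_i=e^{\pi i\eta_i}$, $i=1,\dots,k$, and for $\xi\in\Pi^n(\Lambda)$ it has at least $n$ roots, forcing $P_\xi\equiv0$, i.e. $\hat{f_j}(\xi)=0$ for all $j$. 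Under this dictionary the defining relation of $P^{E,\xi}_l$ says precisely that $\psi$ is, locally on $E$, a root of a monic polynomial of degree $l-1$ whose lower coefficients lie in the Wiener algebra $\widehat{L^1}$, and membership of $\xi$ in the starred sets says that the monic polynomial $\prod_{i}(z-z_i)$ (tight, degree $k$, for $\Pi^{k*}_{ne}$ and $\Pi^{n-1*}$) or some monic degree $n-1$ extension of it (loose, for $\Pi^{k*}_{su}$ and $\Pi^{1*}$) has coefficients that are locally Fourier transforms of $L^1$ functions. I shall use repeatedly Wiener's lemma in the local form quoted in the excerpt, together with the elementary fact that $P^{E,\xi}_l\subseteq P^{E,\xi}_{l+1}$ (multiply the defining relation by $\psi$).

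For the sufficiency I would assume $\hat\mu|_\Lambda=0$ and prove, by downward induction on $r=n-1,n-2,\dots,0$, that $\hat{f_r}\equiv0$. Suppose $\hat{f}_{n-1}=\cdots=\hat{f}_{r+1}\equiv0$ and set $U=\{\xi:\hat{f_r}(\xi)\neq0\}$, which is open. For $\xi\in U$ the polynomial $P_\xi$ has exact degree $r$; on $U$ Wiener's lemma makes $1/\hat{f_r}$ locally a Fourier transform, so the normalized coefficients $g_m=\hat{f_m}/\hat{f_r}$ are locally in $\widehat{L^1}$ and $z^r+g_{r-1}z^{r-1}+\cdots+g_0$ is monic with the prescribed roots. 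If $\xi\in U\cap\Pi^k(\Lambda)$ with $k\geq r+1$, a degree $r$ polynomial with $k$ roots must vanish, a contradiction, so $U$ meets none of these strata (in particular $U\cap\Pi^n(\Lambda)=\varnothing$). If $k\leq r$, multiplying the monic polynomial by $z^{\,n-1-r}$ (permissible since every $z_i\neq0$) exhibits a monic degree $n-1$ polynomial with coefficients in $\widehat{L^1}$ vanishing at $z_1,\dots,z_k$; reading off its coefficients shows $\xi\in\Pi^{1*}(\Lambda)$ when $k=1$, $\xi\in\Pi^{k*}_{su}(\Lambda)$ when $1<k<n-1$, and $\xi\in\Pi^{n-1*}(\Lambda)$ when $k=n-1$ (which forces $r=n-1$). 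Hence $U$ is disjoint from every piece of $\widetilde{\Pi_{su}(\Lambda)}$, so $\hat{f_r}$ vanishes on the dense set $\widetilde{\Pi_{su}(\Lambda)}$; being continuous it vanishes identically, so $U=\varnothing$. This closes the induction, giving every $f_j=0$ and hence $\mu=0$.

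For the necessity I would argue by contraposition: assuming $\widetilde{\Pi_{ne}(\Lambda)}$ is not dense, pick an open interval $I_0$ disjoint from it and construct a nonzero $\mu$ with $\hat\mu|_\Lambda=0$ whose Fourier transforms $\hat{f_j}$ vanish outside $I_0$. On $I_0$ there are no points of $\Pi^n(\Lambda)$, and every $\xi\in\Pi^k(\Lambda)\cap I_0$ lies in the corresponding \emph{tight} starred set, so the monic polynomial $\prod_{i=1}^k(z-z_i(\xi))$ has coefficients that are locally Fourier transforms of $L^1$ functions. The idea is to assemble these into coefficient functions $g_0,\dots,g_{n-2}$, set $\hat{f}_{n-1}=\phi$ for a smooth bump $\phi$ supported in $I_0$ (so $f_{n-1}$ is Schwartz, hence in $L^1$) and $\hat{f_j}=\phi\,g_j$, noting that $\phi\,g_j\in\widehat{L^1}$ because a product of Fourier transforms of $L^1$ functions is again one. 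Outside $I_0$ all $\hat{f_j}$ vanish, so no constraint survives there, while inside $I_0$ the polynomial $z^{n-1}+g_{n-2}z^{n-2}+\cdots+g_0$ is arranged to vanish at the required roots, yielding $\hat\mu|_\Lambda=0$ with $\mu\neq0$.

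The main obstacle is exactly this assembly step. The strata $\Pi^1(\Lambda),\dots,\Pi^{n-1}(\Lambda)$ may interlace densely inside $I_0$, and on each of them the natural monic polynomial has a different degree $k$; padding it up to degree $n-1$ forces a choice of extra roots, and an arbitrary choice will in general destroy the continuity, indeed the membership in $\widehat{L^1}$, of the coefficient functions at the interfaces between strata. Overcoming this is where the precise \emph{local} formulation of the starred sets is essential: each condition furnishes \emph{global} functions $\varphi_i\in L^1$ solving the relevant system on a whole neighbourhood $I_\xi\cap\Pi^k(\Lambda)$, and I would exploit this, together with Wiener's lemma and a shrinking of $I_0$, to patch the local solutions into one family of coefficient functions in $\widehat{L^1}$ that simultaneously meets the root conditions of every stratum, all the while keeping the top coefficient $\phi$ nonzero so that the resulting $\mu$ is genuinely nonzero. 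It is precisely the freedom in choosing these padding roots that renders the sufficient condition (the loose sets $\Pi^{j*}_{su}(\Lambda)$) strictly weaker than the necessary one (the tight sets $\Pi^{j*}_{ne}(\Lambda)$) for the middle strata $2\leq j\leq n-2$, which is why the theorem stops short of a full characterization.
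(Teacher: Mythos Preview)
Your sufficiency argument is correct and is essentially the paper's proof, reorganized as a clean downward induction on the top surviving coefficient rather than a pointwise case analysis; the paper also uses Wiener's lemma to normalize by the highest nonvanishing $\hat f_l$ and then pads up to degree $n-1$ exactly as you do by your multiplication by $z^{\,n-1-r}$.

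Your necessity argument, however, has a genuine gap at precisely the point you flag as ``the main obstacle''. The plan ``exploit the local formulation together with Wiener's lemma and a shrinking of $I_0$ to patch the local solutions into one family of coefficient functions'' is not a proof: the strata $\Pi^k(\Lambda)\cap I_0$ can each be dense in $I_0$, so no shrinking of $I_0$ separates them, and there is no evident way to choose the padding roots coherently so that the resulting coefficient functions are continuous, let alone in $\widehat{L^1}$, across the interlaced strata. The paper does \emph{not} attempt a direct patching. Its key idea is a density lemma: if $\Pi^{k*}_{ne}(\Lambda)$ is dense in an interval $I$, then on some subinterval $J\subset I$ one has $J\cap\Pi(\Lambda)\subset\bigcup_{j\ge k}\Pi^j(\Lambda)$. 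The proof is that the squared Vandermonde
\[
\tau(\xi)=\prod_{1\le q<p\le k}\bigl(e^{\pi i\eta_p}-e^{\pi i\eta_q}\bigr)^2
\]
is a symmetric polynomial in the $e^{\pi i\eta_j}$, hence (fundamental theorem of symmetric polynomials) a polynomial in the elementary symmetric functions, which are exactly the coefficients of the \emph{tight} monic polynomial $\prod_j(z-e^{\pi i\eta_j})$; membership in $\Pi^{k*}_{ne}$ puts these coefficients locally in $\widehat{L^1}$, so $\tau$ extends continuously near $\xi_0$ and stays nonzero on a subinterval $J$. Closedness of $\Lambda$ then forbids any $\xi'\in J\cap\Pi^{k'}(\Lambda)$ with $k'<k$, since approximating $\xi'$ by points of $\Pi^{k*}_{ne}$ forces two of the limiting $\eta_i'$ to coincide and hence $\tau(\xi')=0$. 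With this lemma in hand, an induction on the number of strata present reduces to the easy single-stratum case, where your direct construction (pick a bump $\phi$ and set $\hat f_j=\phi\,\hat\varphi_j$) goes through without any patching. This use of the discriminant is the missing idea in your proposal, and it also explains structurally why the necessary condition involves the tight sets $\Pi^{k*}_{ne}$: one needs the elementary symmetric functions themselves, not merely some degree-$(n-1)$ extension, to lie in $\widehat{L^1}$ in order to control $\tau$.
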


Note that, when $n=3$, the above theorem gives the same result proved by Babot in \cite{B}.
\section{Proof of the main theorem}
We will start this section by proving that $\Pi^{k*}_{ne}(\Lambda)$ is actually contained in $\Pi^{k*}_{su}(\Lambda)$.
\begin{lem}\label{lem:subset}

\begin{enumerate}

\item $\Pi^{k*}_{ne}(\Lambda)\subset \Pi^{k*}_{su}(\Lambda),\;\;\; k=2,3, \cdots , n-2.$\\
\item $P^{E, \xi}_2\subset P^{E, \xi}_3\subset ... \subset P^{E, \xi}_n.$
\end{enumerate}
\end{lem}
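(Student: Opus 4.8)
The plan is to prove both inclusions by the same elementary device: realize each defining condition as a polynomial identity in the variable $z=e^{\pi i \eta}$ and multiply through by a suitable power of $z$, which merely reindexes the $L^1$-coefficients and pads the vacated slots with the zero function.

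For part (2), I would start from $\psi\in P^{E,\xi}_l$, so there is an interval $I_\xi$ and $\varphi_1,\ldots,\varphi_{l-1}\in L^1(\R)$ with
$$\hat{\varphi_1}+\hat{\varphi_2}\psi+\cdots+\hat{\varphi_{l-1}}\psi^{l-2}+\psi^{l-1}=0$$
on $I_\xi\cap E$. Multiplying this identity by $\psi$ gives $\hat{\varphi_1}\psi+\hat{\varphi_2}\psi^2+\cdots+\hat{\varphi_{l-1}}\psi^{l-1}+\psi^{l}=0$ on the same set. Reading off coefficients, the choice $\tilde{\varphi}_1=0$ and $\tilde{\varphi}_{j+1}=\varphi_j$ for $1\le j\le l-1$ (all lying in $L^1(\R)$, since the zero function does) exhibits $\psi$ as an element of $P^{E,\xi}_{l+1}$. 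As this works for every $l$ with $2\le l\le n-1$, chaining the inclusions yields $P^{E,\xi}_2\subset P^{E,\xi}_3\subset\cdots\subset P^{E,\xi}_n$.

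For part (1), fix $\xi\in\Pi^{k*}_{ne}(\Lambda)$ with $2\le k\le n-2$ and write $z_j=e^{\pi i \eta_j}$ for the distinct points coming from $Img(\xi)$. Unwinding the square system $A^{\Pi^k(\Lambda)}_{\xi,k}X_\xi=B^{\Pi^k(\Lambda)}_{\xi,k}$ row by row, the hypothesis supplies $\varphi_1,\ldots,\varphi_k\in L^1(\R)$ and an interval $I_\xi$ with $\hat{\varphi_1}+\hat{\varphi_2}z_j+\cdots+\hat{\varphi_k}z_j^{k-1}+z_j^{k}=0$ on $I_\xi\cap\Pi^k(\Lambda)$, for each relevant $j$. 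Since $k\le n-2$, the exponent $n-1-k$ is a positive integer; multiplying by $z_j^{\,n-1-k}$ produces $\hat{\varphi_1}z_j^{\,n-1-k}+\cdots+\hat{\varphi_k}z_j^{\,n-2}+z_j^{\,n-1}=0$. This is exactly the row-wise form of the wider system $A^{\Pi^k(\Lambda)}_{\xi,n-1}X_\xi=B^{\Pi^k(\Lambda)}_{\xi,n-1}$ once we set the first $n-1-k$ coordinates of $X_\xi$ equal to $\hat{0}$ and the remaining $k$ coordinates equal to $\hat{\varphi_1},\ldots,\hat{\varphi_k}$ in order. As all these functions lie in $L^1(\R)$, this vector is a solution of the su-system on $I_\xi\cap\Pi^k(\Lambda)$, whence $\xi\in\Pi^{k*}_{su}(\Lambda)$.

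There is no serious obstacle; the content is the bookkeeping of matching coefficients after the multiplication, together with the observations that the zero function belongs to $L^1(\R)$ (so the padding is legitimate) and that the square Vandermonde matrix $A^{\Pi^k(\Lambda)}_{\xi,k}$, having distinct nodes $z_j$ on the unit circle, is invertible (so the ne-condition genuinely determines a solution to shift). The one point demanding care is that the power of $z$ used in part (1) be nonnegative, which is guaranteed precisely by the range $2\le k\le n-2$; with $k=n-1$ the shift is trivial and the ne- and su-conditions coincide, consistent with the earlier observation that the two matrices agree when $k=n-1$.
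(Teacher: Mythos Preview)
Your argument is correct. Both parts rest on the same clean observation: multiply the given monic polynomial relation by a power of the variable and pad the vacated low-order coefficients with the zero function, which is certainly the Fourier transform of an $L^1$ function.

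The paper's own proof reaches the same conclusion by a slightly different route. For part~(1) it proceeds \emph{iteratively}: at each step it multiplies the degree-$m$ relation once by $e^{\pi i\eta_j}$ and once by $\hat f$ (for an auxiliary $f\in L^1(\R)$ with $\hat f$ supported in $I_{\xi_0}$), then adds the two, producing a degree-$(m+1)$ relation whose coefficients are explicit convolutions such as $\varphi_{r}+f*\varphi_{r+1}$ and whose constant term is $f*\varphi_1$. Repeating $n-1-k$ times reaches the su-system. For part~(2) the paper first writes out an ad~hoc witness for $P^{E,\xi}_2\subset P^{E,\xi}_3$ (namely $\varphi_1=-\tfrac12\varphi$, $\varphi_2=-\tfrac12\varphi*\varphi$) and then invokes the same iterative trick for the higher inclusions. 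Your single multiplication by $z_j^{\,n-1-k}$ (respectively by $\psi$) with zero-padding is more direct and avoids the auxiliary function $f$ altogether; the paper's version, on the other hand, yields coefficients that are genuine convolutions rather than zeros, which is occasionally convenient when one later wants a \emph{nonvanishing} solution, but is not needed for the bare inclusion asserted in the lemma.
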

\begin{proof}(1) Let $\xi_0\in \Pi^{k*}_{ne}(\Lambda)$. Then there exists an interval $I_{\xi_0}$ containing $\xi_0$ and functions $\varphi_i \in L^1(\R)$, $i=1,2,,\cdots,k$ such that
\be\label{eq:k}\hat{\varphi_1}(\xi)+e^{\pi i \eta_j}\hat{\varphi_2}(\xi)+\cdots +e^{(k-1)\pi i \eta_j}\hat{\varphi_k}(\xi)=-e^{k\pi i \eta_j}\ee
on $I_{\xi_0}\cap \Pi^{k}(\Lambda)$, for $j=1,2,...,k$. Here, $\eta_1, \eta_2,...,\eta_k\in Img(\xi)$.

Now, let $f$ be a function in $L^1(\R)$ such that the support of $\hat{f}$ is contained in $I_{\xi}$. Multiplying both side of the equation (\ref{eq:k}) by $e^{\pi i \eta_j}$ and $\hat{f}$ respectively and then adding we get
$$\hat{f}\hat{\varphi_1}+e^{\pi i \eta_j}(\hat{\varphi_1}+\hat{f}\hat{\varphi_2})+\cdots+ e^{(k-1)\pi i \eta_j}(\hat{\varphi_{k-1}}+\hat{f}\hat{\varphi_{k}})+e^{k \pi i \eta_j}(\hat{\varphi_k}+\hat{f})=-e^{(k+1) \pi i \eta_j}$$
on $I_{\xi_0}\cap \Pi^k(\Lambda)$. Continuing this process iteratively we will find $\phi_i\in L^1(\R)$, $i=1,2,\cdots, n-1$ such that
$$\hat{\phi_1}(\xi)+e^{\pi i \eta_j}\hat{\phi_2}(\xi)+\cdots +e^{(n-2)\pi i \eta_j}\hat{\phi_{n-1}}(\xi)=-e^{(n-1)\pi i \eta_j}$$
on $I_{\xi_0}\cap \Pi^{k}(\Lambda)$ for $j=1,2,\cdots, k$. This implies $\xi_0\in \Pi^{k*}_{su}(\Lambda)$. Hence, first part of the lemma is proved.

(2) Suppose $\psi\in P^{\Pi^1(\Lambda), \xi}_2$. Then there exists an interval $I_{\xi}$ around $\xi$ and a function $\varphi\in L^1(\R)$ such that $\psi=\hat{\varphi}$. Choosing $\varphi_1= -\frac{1}{2}\varphi$ and $\varphi_2=-\frac{1}{2}\varphi*\varphi$ one can easily see that
$$\hat{\varphi_2}+\hat{\varphi_1}\psi+\psi^2=0$$
on $I_{\xi}\cap\Pi^1(\Lambda)$. Hence, $P^{\Pi^1(\Lambda), \xi}_2\subset P^{\Pi^1(\Lambda), \xi}_3$. Now we have to prove $P^{\Pi^1(\Lambda), \xi}_l\subset P^{\Pi^1(\Lambda), \xi}_{l+1}$, for $l>2$. We can prove that by using similar technique used to prove first part of the lemma.
\end{proof}

\subsection{Proof of the sufficient part}: Let $\widetilde{\Pi_{su}(\Lambda)}$ be dense in $\R$. Also, suppose $\hat{\mu}|_{\Lambda}=0$ for some measure $\mu$ which is supported on $\Gamma$ and absolutely continuous with respect to the arc length of $\Gamma$. This implies there exists $f_1, f_2,\cdots,f_n\in L^1(\R)$ such that
$$\hat{f}_1(\xi)+ e^{\pi i \eta} \hat{f}_2(\xi)+\cdots+e^{(n-1)\pi i \eta}\hat{f}_n(\xi)=0$$
for all $(\xi,\eta)\in\Lambda$. Since $\widetilde{\Pi_{su}(\Lambda)}$ is dense in $\R$, in order to prove $\mu=0$, it is enough to prove that $f_i|_{\widetilde{\Pi_{su}(\Lambda)}}=0$.

\underline{Case-I}: For $\xi\in \Pi^1(\Lambda)$, we have
\be\label{eq:suff1}\hat{f}_1(\xi)+ e^{\pi i \eta} \hat{f}_2(\xi)+\cdots+e^{(n-1)\pi i \eta}\hat{f}_n(\xi)=0,\ee
 where $\eta\in Img(\xi)$.  Let $\xi_0$ be a point in $\Pi^1(\Lambda)$. If $\hat{f}_n(\xi_0)\neq 0$, then by Wiener's lemma $\frac{1}{\hat{f}_n}\in P^{\Pi^1(\Lambda), \xi_0}_2$. So, by the definition of $P^{\Pi^1(\Lambda), \xi_0}_2$ there exists an interval $I_{\xi_0}$ around $\xi_0$ and $\varphi\in L^1(\R)$ such that $\hat{\varphi}=\frac{1}{\hat{f_n}}$ on $I_{\xi_0}\cap \Pi^1(\Lambda)$. Dividing Equation (\ref{eq:suff1}) by $\hat{f}_n(\xi)$ we have
$$\frac{\hat{f}_1(\xi)}{\hat{f}_n(\xi)}+ e^{\pi i \eta_0} \frac{\hat{f}_2(\xi)}{\hat{f}_n(\xi)}+\cdots+e^{(n-1)\pi i \eta_0}=0$$
on $I_{\xi_0}\cap \Pi^1(\Lambda)$.
Let $\chi(\xi)=e^{\pi i \eta}$, where $\eta\in Img(\xi)$. As $\frac{\hat{f}_i(\xi)}{\hat{f}_n(\xi)}\in P^{\Pi^1(\Lambda), \xi_0}_2$ for $i=1,2,\cdots, n-1$, we can conclude that $\chi\in P^{\Pi^1(\Lambda), \xi_0}_n$ and hence $\xi_0\in \Pi^{1*}(\Lambda).$

If $\hat{f}_l(\xi_0)\neq 0$ for some $2\leq l<n$ but $\hat{f}_i(\xi_0)=0$ for all $i>l$, then we can similarly show that $\chi_0 \in  P^{\Pi^1(\Lambda), \xi_0}_l$. Hence, applying Lemma \ref{lem:subset} we have $\xi_0\in \Pi^{1*}(\Lambda)$.

Thus we can conclude that if $\xi_0\in \Pi^{1}(\Lambda)\setminus \Pi ^{1*}(\Lambda)$, then $f_i(\xi_0)=0$ for all $i=1,2,\cdots , n$.

\underline{Case-II}: Choose $k$ so that $2\leq k< n-1$. For $\xi\in \Pi^k(\Lambda)$, we have
\be\label{eq:suff2}\hat{f}_1(\xi)+ e^{\pi i \eta_j} \hat{f}_2(\xi)+\cdots+e^{(n-1)\pi i \eta_j}\hat{f}_n(\xi)=0,\ee
where $\eta_j\in Img(\xi)$ for $j=1,2,\cdots, k$. Suppose, $\xi_0\in \Pi^k(\Lambda)$. Now if $\hat{f}_n (\xi_0)\neq 0$, then again by Wiener's lemma $\frac{1}{\hat{f}_n}\in P^{\Pi^1(\Lambda), \xi_0}_2$. Let $I_{\xi_0}$ be the corresponding interval around $I_{\xi_0}$. Dividing equation (\ref{eq:suff2}) by $\hat{f}_n(\xi)$ we have
$$\frac{\hat{f}_1(\xi)}{\hat{f}_n(\xi)}+ e^{\pi i \eta_j} \frac{\hat{f}_2(\xi)}{\hat{f}_n(\xi)}+\cdots+e^{(n-1)\pi i \eta_j}=0,$$
for all $\xi\in I_{\xi_0}\cap \Pi^k(\Lambda)$. This implies $(\frac{\hat{f}_1}{\hat{f}_n}, \frac{\hat{f}_2}{\hat{f}_n},\cdots, \frac{\hat{f}_{n-1}}{\hat{f}_n})$ is a solution of the system of equation $A_{\xi,n-1}^{\Pi^k(\Lambda)}X_\xi= B_{\xi, n-1}^{\Pi^k(\Lambda)} \mbox{ on } I_{\xi_0}\cap \Pi^k(\Lambda)$. As each $\frac{\hat{f}_i}{\hat{f}_n}\in P^{\Pi^k(\Lambda), \xi_0}_2$, for $i=1,2,\cdots,n-1$, we can conclude that $\xi_0\in \Pi^{k*}_{su}$.

If $\hat{f}_l(\xi_0)\neq 0$ for some $l\geq k$ and $\hat{f}_j(\xi_0)=0$ for all $j>l$, then we will have
$$\frac{\hat{f}_1(\xi)}{\hat{f}_l(\xi)}+ e^{\pi i \eta_j} \frac{\hat{f}_2(\xi)}{\hat{f}_l(\xi)}+\cdots+e^{(n-1)\pi i \eta_j}=0$$
on $I_{\xi_0}\cap \Pi^k(\Lambda)$ for some interval $I_{\xi}$. Similarly as we have done in the first part of the Lemma \ref{lem:subset}, by taking functions whose Fourier transform is supported on $I_{\xi_0}$ and then by iterative method we can find $\varphi_i\in L^1(\R)$, $i=1,2,\cdots, n$ so that $(\hat{\varphi}_1, \hat{\varphi}_2,\cdots,\hat{\varphi}_{n-1})$ will be a solution of the system of equation $A_{\xi,n-1}^{\Pi^k(\Lambda)}X_\xi= B_{\xi, n-1}^{\Pi^k(\Lambda)} \mbox{ on } I_{\xi_0}\cap \Pi^k(\Lambda)$. Hence $\xi_0\in \Pi^{k*}_{su}(\Lambda)$.

If $f_j(\xi_0)=0$ for all $j\geq k$, then the equation (\ref{eq:suff1}) turns out to be a homogenous system of equation with k number of equations where the number of variables is less than k and determinant of the corresponding matrix is non-zero. Hence, $f_j(\xi_0)=0$ for all $j=1,2, \cdots , n$.

\underline{Case-III}: Suppose $\xi_0\in \Pi^{n-1}(\Lambda)$. Then similarly as we have shown in case-II, we can prove that if $\hat{f}_n(\xi_0)\neq 0$ then $\xi_0\in \Pi^{n-1*}(\Lambda)$ and if $\hat{f}_n(\xi_0)=0$ then $f_i(\xi_0)=0$ for all $i=1,2,\cdots, n$.

\underline{Case-IV}: If $\xi_0\in \Pi^n(\Lambda)$, then we have a system of homogeneous equation with n equations and n variables where the determinant of the corresponding matrix is non-singular. Hence $\hat{f}_j(\xi_0)=0$ for all $j=1,2,\cdots,n$.

Therefore, from the above four cases we can conclude that $\hat{f}_j|_{\widetilde{\Pi}(\Lambda)}=0$, as we required.

\subsection{Proof of the necessary part}For convenience, in the proof of necessary part we will use the
notations $\Pi^{1*}_{ne}(\Lambda)$ and $\Pi^{n-1*}_{ne}(\Lambda)$ in place of $\Pi^{1*}(\Lambda)$ and $\Pi^{n-1*}(\Lambda)$ respectively.

 Let $(\Gamma, \Lambda)$ be a Heisenberg uniqueness pair for some $\Lambda\subset \R^2$.
We will show that $\widetilde{\Pi_{ne}(\Lambda)}$ is dense in $\R$. As, $(\Gamma, \Lambda)$ is a HUP, one can notice that $\Pi(\Lambda)$ has to be dense in $\R$. This is because we can always consider measures which are only supported on $\R\times \{0\}$ and then apply the result observed in \cite{HM} corresponding to a single line. Now, suppose $\widetilde{\Pi_{ne}(\Lambda)}$ is not dense in $\R$. Then, there exists an interval $I_0$ such that $\widetilde{\Pi_{ne}(\Lambda)}\cap I_0$ is empty. But as $\Pi(\Lambda)$ is dense in $\R$, $\Pi(\Lambda)\cap I_0$ is non-empty. So, we have
$$\Pi(\Lambda)\cap I_0=\left(\bigcup_{k=1}^{n-1}\Pi^{k*}_{ne}(\Lambda)\right)\cap I_0.$$

 In order to prove the necessary condition we need the following lemmas.
\begin{lem}\label{lem:density}
Let $I$ be an interval such that $I\cap \Pi^{k*}_{ne}(\Lambda)$ is dense in $I$ for some $k$ satisfying $2\leq k\leq n-1$. Then there exist a subinterval $J\subset I$ such that $J\cap \Pi(\Lambda)\subset \bigcup_{j=k}^n \Pi^j(\Lambda)$.
\end{lem}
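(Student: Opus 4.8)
The plan is to reduce the statement to a root-counting argument for a single monic polynomial with continuously varying coefficients, whose discriminant detects membership in the top strata. First I would fix a point $\xi_0\in I\cap\Pi^{k*}_{ne}(\Lambda)$ and let $I'$ be its associated interval $I_{\xi_0}$, shrunk so that $I'\subseteq I$. By definition there are $\varphi_1,\dots,\varphi_k\in L^1(\R)$ for which $(\hat\varphi_1,\dots,\hat\varphi_k)$ solves $A_{\xi,k}^{\Pi^k(\Lambda)}X_\xi=B_{\xi,k}^{\Pi^k(\Lambda)}$ at every $\xi\in I'\cap\Pi^k(\Lambda)$; written out, this says precisely that for each such $\xi$ the monic polynomial
$$Q_\xi(z)=z^k+\hat\varphi_k(\xi)z^{k-1}+\cdots+\hat\varphi_2(\xi)z+\hat\varphi_1(\xi)$$
has as its roots the $k$ distinct unit-circle points $e^{\pi i\eta_1},\dots,e^{\pi i\eta_k}$, where $\eta_1,\dots,\eta_k$ enumerate $Img(\xi)$. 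The coefficients $\hat\varphi_i$ are continuous on $I'$, being Fourier transforms of $L^1$ functions, and since $\Pi^{k*}_{ne}(\Lambda)\cap I$ is dense in $I$ it is dense in the subinterval $I'$; as $\Pi^{k*}_{ne}(\Lambda)\subseteq\Pi^k(\Lambda)$, so is $\Pi^k(\Lambda)\cap I'$. I would then consider the discriminant $D(\xi)$ of $Q_\xi$: it is a fixed polynomial expression in the continuous coefficients, hence continuous on $I'$, and it is nonzero on the dense set $\Pi^k(\Lambda)\cap I'$ because $Q_\xi$ has $k$ distinct roots there. In particular $D\not\equiv 0$ on $I'$.

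Next I would argue by contradiction, assuming no subinterval $J$ as claimed exists, i.e. that $\bigcup_{m=1}^{k-1}\Pi^m(\Lambda)\cap I'$ is dense in $I'$. For any $\xi'\in\Pi^m(\Lambda)\cap I'$ with $m<k$, choose $\xi^{(l)}\to\xi'$ lying in the dense set $\Pi^k(\Lambda)\cap I'$. Since the roots of a monic polynomial depend continuously on its coefficients and the coefficients converge, the multiset of roots of $Q_{\xi^{(l)}}$, namely $\{e^{\pi i\eta_j(\xi^{(l)})}\}_{j=1}^{k}$, converges to that of $Q_{\xi'}$. Passing to a subsequence with $\eta_j(\xi^{(l)})\to\eta_j^*\in[0,2]$, the closedness and $2$-periodicity of $\Lambda$ force each limit $e^{\pi i\eta_j^*}$ to lie in $\{e^{\pi i\eta}:\eta\in Img(\xi')\}$, a set of exactly $m$ points. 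Thus all $k$ roots of $Q_{\xi'}$ lie in an $m$-element set with $m<k$, so by pigeonhole $Q_{\xi'}$ has a repeated root and $D(\xi')=0$. Hence $D$ vanishes on a dense subset of $I'$; being continuous with closed zero set, $D\equiv 0$ on $I'$, contradicting $D\not\equiv 0$. Therefore $\bigcup_{m=1}^{k-1}\Pi^m(\Lambda)\cap I'$ fails to be dense, and its complement contains a subinterval $J\subseteq I$ with $J\cap\Pi(\Lambda)\subseteq\bigcup_{j=k}^{n}\Pi^j(\Lambda)$, as required.

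The step I expect to be the main obstacle is the passage at $\xi'$: establishing rigorously that the limiting roots of $Q_{\xi'}$ must lie among the finitely many points $e^{\pi i\eta}$, $\eta\in Img(\xi')$. This is exactly where the hypotheses that $\Lambda$ is closed and $2$-periodic enter, and one must treat the boundary case $\eta_j^*=2$ by periodicity, identifying $e^{2\pi i}$ with $e^{0}$. The remaining ingredients — continuity of the coefficients, continuity of the roots in the coefficients, and the discriminant having a closed zero set — are routine.
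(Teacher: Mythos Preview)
Your argument is correct and is essentially the paper's own proof: the paper's function $\tau(\xi)=\prod_{p>q}(e^{\pi i\eta_p}-e^{\pi i\eta_q})^2$ is exactly the discriminant $D(\xi)$ of your polynomial $Q_\xi$, and the paper likewise uses a sequence $\xi_l\to\xi'$ together with the closedness of $\Lambda$ to force a root collision at any $\xi'\in\Pi^{k'}(\Lambda)$ with $k'<k$. The only cosmetic difference is that the paper immediately shrinks to a subinterval $J$ on which $\tau\neq 0$ (using $\tau(\xi_0)\neq 0$ and continuity) and then rules out a single such $\xi'$, whereas you keep the full $I'$ and argue by density; both routes yield the same conclusion.
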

\begin{proof} Choose some $k$ satisfying $2\leq k\leq n-1$. In $\Pi^{k}(\Lambda)$ we can define a function $\tau$ as follows
\be\label{eq:tau}\tau(\xi)= \prod_{p=2}^k\prod_{q=1}^{p-1}(e^{\pi i \eta_p}-e^{\pi i \eta_q})^2\ee
where $\eta_p\in Img(\xi)$, $p=1,2,...,k$. For simplicity, we will denote $a_p= e^{\pi i \eta_p}$, where $p=1,2,...,k$ and $|a_p|=1$.

We claim that if $\xi\in\Pi^{k*}_{ne}(\Lambda)$, then $\tau \in P^{\Pi^k(\Lambda), \xi}_2$. To see this we have to first observe that the solution of the equation $A_{\xi,k}^{\Pi^k(\Lambda)}X_\xi= B_{\xi, k}^{\Pi^k(\Lambda)}$ is given by the following elementary symmetric polynomials of $a_1, a_2, \cdots a_k$ defined as
$$e_1(\xi)=(-1)^ka_1 a_2\cdots a_k$$
$$e_2(\xi)=(-1)^{k-1}\sum_{i=1}^k \prod_{j\neq i}a_j$$
$$\cdots$$
$$\cdots$$
$$e_{k-1}(\xi)=\sum_{1\leq i<j \leq k} a_i a_j$$
$$e_k(\xi)=-\sum_{i=1}^k a_i.$$

By hypothesis each $e_i$ belongs to $P^{\Pi^k(\Lambda), \xi}_2$, for $i=0,1,2,\cdots,k$.
As $\tau$ is a symmetric polynomials of $a_1, a_2,\cdots, a_k$, by fundamental theorem of symmetric polynomials it can be written as a polynomial of $e_k, e_{k-1}, \cdots, e_1$. Therefore,  $\tau\in P^{\Pi^k(\Lambda), \xi}_2$. This proves our claim.

Now, we are in a position to prove the lemma. Let $\xi_0$ be a point in $I\cap\Pi^{k*}_{ne}(\Lambda)$. Clearly $\tau(\xi_0)\neq 0$. As $\tau\in P^{\Pi^k(\Lambda), \xi_0}_2$, we can extend $\tau$ continuously on a subinterval $J\subset I$ containing $\xi_0$ so that $\tau(\xi)\neq 0$ for all $\xi\in J$. We will prove that $J\cap \Pi(\Lambda)\subset \bigcup_{j=k}^n \Pi^{k}(\Lambda).$ Suppose there is a point $\xi'\in J\cap\Pi(\Lambda)$ which belongs to $\Pi^{k'}(\Lambda)$ for some $k'<k$. As $J\cap\Pi^{k*}_{ne}(\Lambda)$ is dense in $J\subset I$, we can find a sequence $\{\xi_l\}$ which converges to $\xi'$. Let $\eta^l_i\in Img(\xi_l)$, for $i=1,2,\cdots, k$. Since, $0\leq \eta_i^l<2$, we can find convergent subsequences $\{\eta^{l'}_i\}$ of $\eta^l_i$ and let $\eta^{l'}_i\rightarrow \eta_i'$ as $l'\rightarrow \infty$, for each  $i=1,2,\cdots, k$. As $\Lambda$ is closed, $\eta_i'\in Img(\xi')$, for each $i=1,2,\cdots, k$. This implies $\eta_i'=\eta_j'$ for some $i\neq j$. Hence, from the continuity of the exponential function, we have $\tau(\xi')=0$, which contradicts the fact that $\tau\neq 0$ on $J$. Therefore, the lemma is proved.

\end{proof}

\begin{lem}\label{lem:nec1} There does not exist any subinterval $J$ of $I_0$ such that $\Pi(\Lambda)\cap J\subset \Pi^{k*}_{ne}(\Lambda)$ for any $k=1,2,\cdots n-1$.
\end{lem}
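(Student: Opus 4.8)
The plan is to argue by contradiction: if such a subinterval $J$ and index $k$ existed, I would manufacture from them a nonzero measure on $\Gamma$, absolutely continuous with respect to arc length, whose Fourier transform vanishes on all of $\Lambda$, contradicting the standing assumption that $(\Gamma,\Lambda)$ is a HUP. So assume $\Pi(\Lambda)\cap J\subset\Pi^{k*}_{ne}(\Lambda)$ for some $1\le k\le n-1$. Since $\Pi(\Lambda)$ is dense in $\R$ and $J$ is a nonempty open interval, the set $\Pi(\Lambda)\cap J$ is nonempty, so I may fix a point $\xi_0\in\Pi^{k*}_{ne}(\Lambda)\cap J$. Unwinding the definition of $\Pi^{k*}_{ne}(\Lambda)$ — recall the convention that $\Pi^{1*}_{ne}=\Pi^{1*}$ and $\Pi^{n-1*}_{ne}=\Pi^{n-1*}$ — yields an interval $I_{\xi_0}$ about $\xi_0$ and functions $\varphi_1,\dots,\varphi_m\in L^1(\R)$, with $m=k$ when $k\ge 2$ and $m=n-1$ when $k=1$, such that the corresponding Vandermonde system holds; written out, this is the identity
\[
\hat{\varphi_1}(\xi)+e^{\pi i \eta_j}\hat{\varphi_2}(\xi)+\cdots+e^{(m-1)\pi i \eta_j}\hat{\varphi_m}(\xi)+e^{m\pi i \eta_j}=0
\]
holding on $I_{\xi_0}\cap\Pi^{k}(\Lambda)$ for every $\eta_j\in Img(\xi)$.

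Next I would localize this identity in the frequency variable $\xi$ so as to turn it into an honest global $L^1$ relation. Choose $g\in L^1(\R)$ with $\hat g\in C_c^\infty(\R)$, $\hat g\not\equiv 0$, and $\operatorname{supp}\hat g\subset I_{\xi_0}\cap J$; this is possible because $I_{\xi_0}\cap J$ is a nonempty open interval containing $\xi_0$. Multiplying the displayed identity by $\hat g(\xi)$ and interpreting each product $\hat g\,\hat{\varphi_i}$ as $\widehat{g*\varphi_i}$, I set $f_0=g*\varphi_1,\ f_1=g*\varphi_2,\ \dots,\ f_{m-1}=g*\varphi_m,\ f_m=g$, and $f_{m+1}=\cdots=f_{n-1}=0$. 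All these lie in $L^1(\R)$, each $\hat{f_i}$ is supported in $I_{\xi_0}\cap J\subset J$, and $\hat{f_m}=\hat g\not\equiv 0$ forces $f_m\neq 0$. Let $\mu$ be the measure on $\Gamma$ associated to $f_0,\dots,f_{n-1}$ as in \eqref{eq:mu}; since the lines $y=0,1,\dots,n-1$ are disjoint and $f_m\neq 0$, we have $\mu\neq 0$.

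It remains to verify $\hat\mu|_\Lambda=0$. By the $2$-periodicity of $\hat\mu$ in the second variable it is enough to check $\hat\mu(\xi,\eta)=0$ whenever $(\xi,\eta)\in\Lambda$ and $\eta\in Img(\xi)$. If $\xi\notin\operatorname{supp}\hat g$ every $\hat{f_i}(\xi)$ vanishes and there is nothing to prove. If $\xi\in\operatorname{supp}\hat g$ and $\xi\in\Pi(\Lambda)$, then $\xi\in\Pi(\Lambda)\cap J\subset\Pi^{k*}_{ne}(\Lambda)\subset\Pi^k(\Lambda)$, so $Img(\xi)$ consists of exactly the $k$ points $\eta_1,\dots,\eta_k$ fed into the system; multiplying the displayed identity (valid at $\xi$ since $\xi\in I_{\xi_0}\cap\Pi^k(\Lambda)$) by $\hat g(\xi)$ and recognizing the products as $\hat{f_i}$ gives precisely $\hat\mu(\xi,\eta_j)=0$ for each $j$. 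Hence $\hat\mu$ vanishes on all of $\Lambda$ while $\mu\neq 0$, the desired contradiction, and the lemma follows.

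The step needing the most care is the last one, and it is exactly where the hypothesis must be used in full strength, i.e. $\Pi(\Lambda)\cap J\subset\Pi^{k*}_{ne}(\Lambda)$ rather than merely $\xi_0\in\Pi^{k*}_{ne}(\Lambda)$. The localized identity controls $\hat\mu$ only at the $k$ prescribed points coming from the Vandermonde system, so annihilation on all of $Img(\xi)$ could fail if some $\xi$ under the support of $\hat g$ lay in a different stratum $\Pi^{k'}(\Lambda)$ with $k'\neq k$: then $Img(\xi)$ would carry too many (or too few) points for the system to kill, or the defining equation would not apply at $\xi$ at all. Confining $\Pi(\Lambda)\cap J$ to the single stratum $\Pi^k(\Lambda)$ removes this possibility, and this is the only place the hypothesis genuinely enters. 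I would also record that the extreme case $k=1$ is handled uniformly: there the defining condition $\chi\in P^{\Pi^1(\Lambda),\xi_0}_n$ supplies $m=n-1$ coefficient functions, with $f_{n-1}=g$ playing the role of the normalized top coefficient, but the localization and verification are verbatim the same.
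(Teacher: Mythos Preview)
Your proof is correct and follows the same contradiction strategy as the paper: localize at a point $\xi_0$, choose $g$ with $\hat g$ supported in $I_{\xi_0}\cap J$, build the densities $f_i$ by convolving $g$ with the $\varphi_i$, and verify that the resulting nonzero $\mu$ has $\hat\mu|_\Lambda=0$ using that $\Pi(\Lambda)\cap J$ sits entirely in the single stratum $\Pi^k(\Lambda)$. The one minor difference is that for $2\le k\le n-1$ the paper first invokes Lemma~\ref{lem:subset} to pass from $\Pi^{k*}_{ne}$ to $\Pi^{k*}_{su}$ and hence works with $n-1$ coefficient functions, whereas you stay with the $k$ functions coming from the $ne$-system and simply set $f_{k+1}=\cdots=f_{n-1}=0$; this is a small simplification, but the argument is otherwise the same.
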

\begin{proof}Suppose the result is not true. Then we have two possibilities.

\underline{Case-I}: Let there exists a subinterval $J$ of $I_0$ such that $\Pi(\Lambda)\cap J\subset \Pi^{1*}_{ne}(\Lambda)$. Consider a point $\xi_0\in J\cap \Pi(\Lambda)$. As $\xi_0\in \Pi^{1*}_{ne}(\Lambda)$, there exist an interval $I_{\xi_0}\subset J$ containing $\xi_0$ and $\varphi_i\in L^1(\R)$, $i=1,2,\cdots,n-1$ such that
\be\label{eq:nec}\hat{\varphi_1}+ \hat{\varphi_2}\chi_0+\hat{\varphi_3}\chi_0^2+\cdots + \hat{\varphi_{n-1}}\chi_0^{n-2}+\chi_0^{n-1}=0\ee
on $I_{\xi_0}\cap \Pi^1(\Lambda)$. Now we will choose a non-zero function $f_n\in L^1(\R)$ whose Fourier transform is supported on $I_{\xi_0}$. After choosing $f_n$ one can construct $f_{i-1}= f_n*\varphi_i$, for $i=1,2,\cdots, n$. Let $\mu$ be the measure which is constructed using the above defined $f_i$ so that the Fourier transform of $\mu$ satisfies equation (\ref{eq:mu}). Then, by \ref{eq:nec} $\hat{\mu}|_{I_{\xi_0}\cap \Pi^1(\Lambda)}=0$. As, $\Pi(\Lambda)\cap J\subset \Pi^{1*}_{ne}(\Lambda)$, $\hat{\mu}$ vanishes on $I_{\xi_0}\cap \Pi(\Lambda)$. Since, each $\hat{f_i}$ is supported in $I_{\xi_0}$, we have $\hat{\mu}|_{\Lambda}=0$, which gives a contradiction. Hence, there can not exist any subinterval $J$ of $I_0$ such that $\Pi(\Lambda)\cap J\subset \Pi^{1*}_{ne}(\Lambda)$.

\underline{Case-II}: Let there exists a subinterval $J$ of $I_0$ such that $\Pi(\Lambda)\cap J\subset \Pi^{k*}_{ne}(\Lambda)$, for some $2\leq k\leq n-1$. Again, let us consider a point $\xi_0\in J\cap\Pi(\Lambda)$. By Lemma \ref{lem:subset}, $\xi_0\in \Pi^{k*}_{su}(\Lambda)$. Hence there exists an interval $I_{\xi_0}\subset J$ and functions $\varphi_i\in L^1(\R)$, $i=1,2,\cdots, n$ such that $(\hat{\varphi}_1, \hat{\varphi}_2,\cdots,\hat{\varphi}_{n-1})$ is a non-zero solution of the system of equation $A_{\xi,k}^{\Pi^k(\Lambda)}X_\xi=B_{\xi, k}^{\Pi^k(\Lambda)} \mbox{ on } I_{\xi_0}\cap \Pi^k(\Lambda)$. Now we can construct a  non-zero measure $\mu$ by first choosing a function $f_n$ whose Fourier transform is supported on $I_{\xi_0}$ and the constructing $f_i$ by the same prescription used above for Case-I. Proceeding as in Case-I, we can prove that there can not exist any subinterval $J$ of $I_0$ such that $\Pi(\Lambda)\cap J\subset \Pi^{k*}_{ne}(\Lambda)$, for any $2\leq k\leq n-1$.

Thus the lemma is proved.
\end{proof}

\newpage

\underline{Proof of the necessary part}
\begin{proof} Now we are ready to prove the necessary part of the theorem. It is enough to show that there does not exist any subinterval $J$ of $I_0$ for which $J\cap \Pi(\Lambda)$ is contained in $\bigcup_{k=1}^{n-1}\Pi^{k*}_{ne}(\Lambda).$ We claim that
there does not exist any subinterval $J$ of $I_0$ such that $J\cap \Pi(\Lambda)\subset \bigcup_{j=1}^l \Pi^{\alpha_j*}_{ne} (\Lambda)$ for any $(\alpha_1,\alpha_2,\cdots,\alpha_l)\in\N^n$ satisfying $1\leq \alpha_1<\alpha_2<\cdots<\alpha_l\leq n-1$. We will prove this by induction on $l$. By Lemma \ref{lem:nec1} the result is true for $l=1$. Suppose the result is true for all $l\leq l_0-1<n-1$. Let us consider $(\alpha_1,\alpha_2,\cdots,\alpha_{l_0})\in\N^n$ satisfying $1\leq \alpha_1<\alpha_2<\cdots<\alpha_{l_0}\leq n-1$. Now, suppose there exist an interval $J\subset I_0$ such that $J\cap \Pi(\Lambda)\subset \bigcup_{j=1}^{l_0}\Pi^{\alpha_j*}_{ne}(\Lambda)$. Then $\Pi^{\alpha_{l_0}*}_{ne}(\Lambda)$ is dense in $J\cap \Pi(\Lambda)$, otherwise we will get an interval $J'\subset J \subset I_0$ such that $J'\cap\Pi(\Lambda)\subset \bigcup_{j=1}^{l_0-1}\Pi^{\alpha_k*}_{ne}(\Lambda)$, which will contradict our induction hypothesis. Since, $\Pi^{\alpha_{l_0}*}_{ne}(\Lambda)$ is dense in $J\cap \Pi(\Lambda)$ and $J\cap\Pi(\Lambda)$ is dense in $J$, $\Pi^{\alpha_{l_0}*}_{ne}(\Lambda)$is dense in $J$. By Lemma
\ref{lem:density} the density of $\Pi^{\alpha_{l_0}*}_{ne}(\Lambda)$ implies that there exist a subinterval $J'\subset J$ such that $J'\cap \Pi(\Lambda)\subset\bigcup_{j=l_0}^n \Pi^j(\Lambda)$. It follows that $J'\cap \Pi(\Lambda)\subset \Pi^{\alpha_{l_0}*}_{ne}(\Lambda)$. But by Lemma \ref{lem:nec1} $J'\cap \Pi(\Lambda)$ can not be contained only in $\Pi^{\alpha_{l_0}*}_{ne}(\Lambda)$. This proves our claim. Therefore, $I$ can not be contained in $\bigcup_{j=1}^{n-1} \Pi^{j*}_{ne}(\Lambda)$ and the necessary part of the theorem is proved.

\end{proof}

We will conclude by pointing out that we can actually improve the necessary condition of our main theorem. Infact, we can notice that the explicit expression of the solutions of the system of linear equation $A_{\xi,k}^{\Pi^k(\Lambda)}X_\xi= B_{\xi, k}^{\Pi^k(\Lambda)}$ is only used in Lemma \ref{lem:density} in order to prove that $\tau\in P^{\Pi^k(\Lambda), \xi}_2$. Hence, in the necessary condition, we can replace the sets $\Pi^{k*}_{ne}(\Lambda)$ by the sets defined below:
$$\tilde{\Pi}^{k*}_{ne}(\Lambda)=\Pi^{k*}_{su}(\Lambda)\cap \{\xi\in \Pi^k(\Lambda): \tau\in P^{\Pi^k(\Lambda), \xi}_2\}$$
where $2\leq k\leq n-2$ and $\tau$ is defined by \ref{eq:tau}.
 Note that from the proof of Lemma \ref{lem:density} we have $\Pi^{k*}_{ne}(\Lambda)\subset \tilde{\Pi}^{k*}_{ne}(\Lambda)$.
\begin{center}
{\bf Acknowledgments}
\end{center}
The author is like to thank I.S.I, Kolkata, for their support. The author also wishes to express his gratitude to Prof. Michael Hopkins for giving him justice.

\end{document}